\pdfoutput=1
\documentclass{article}
\usepackage[a4paper, margin=1.5in]{geometry}

\usepackage[T1]{fontenc}      
\usepackage[utf8]{inputenc}   
\usepackage{microtype}        
\usepackage{mlmodern}

\usepackage{amsmath}
\usepackage{amsthm}
\usepackage{amssymb}
\usepackage{mathtools}
\usepackage{thmtools}
\usepackage{commath}
\usepackage{xfrac}
\usepackage[numbers]{natbib}

\numberwithin{equation}{section}
\declaretheorem[Refname={Theorem,Theorems}]{theorem}
\numberwithin{theorem}{section} 

\declaretheorem[style=definition,numberlike=theorem,Refname={Remark,Remarks}]{remark}
\declaretheorem[numberlike=theorem,Refname={Lemma,Lemmas}]{lemma}

\declaretheorem[name=Proposition,numberlike=theorem,Refname={Proposition,Propositions}]{proposition}

\newcommand{\red}[1]{\mathbf{#1}}









\begin{document}

\title{\textsc{Maximum mean discrepancies of Farey sequences}}

\author{Toni Karvonen\textsuperscript{1} --- Anatoly Zhigljavsky\textsuperscript{2} \vspace{0.2cm} \\ \textsuperscript{1}\emph{Lappeenranta--Lahti University of Technology LUT, Finland.} \\ \textsuperscript{2}\emph{Cardiff University, United Kingdom}}

\maketitle

\begin{abstract}
  \noindent We identify a large class of positive-semidefinite kernels for which a certain polynomial rate of convergence of maximum mean discrepancies of Farey sequences is equivalent to the Riemann hypothesis. This class includes all Matérn kernels of order at least one-half.
\end{abstract}

\section{Introduction}

A kernel $K \colon \Omega \times \Omega \to \mathbb{R}$ on a domain $\Omega$ is positive-semidefinite if the kernel Gram matrix with elements $K(x_i, x_j)$ for $i,j = 1, \ldots, N$ is positive-semidefinite for all $N \in \mathbb{N}$ and $X = (x_1, \ldots, x_N) \subseteq \Omega$.
Each positive-semidefinite kernel induces a unique reproducing kernel Hilbert space $H$ (RKHS) that is a Hilbert space of certain real-valued functions defined on $\Omega$ equipped with a norm $\lVert \cdot \rVert_H$~\citep{Berlinet2004, Paulsen2016}.
Properties such as differentiability and boundedness of the kernel determine which functions are elements of $H$.
Under mild measurability assumptions one can then define the \emph{maximum mean discrepancy} (MMD)
\begin{equation*}
  \mathrm{MMD}(P, X) = \sup_{ \lVert f \rVert_H \leq 1 } \bigg\lvert \int_\Omega f(x) P(\dif x) - \frac{1}{N} \sum_{i=1}^N f(x_i) \bigg \rvert 
\end{equation*}
between a probability measure $P$ on $\Omega$ and the empirical measure $\xi=\sfrac{1}{N} \sum_{i=1}^N \delta_{x_i}$  of the point set $X$.
The MMD measures how well the empirical measure approximates $P$ or, in other words, how well the points are $P$-distributed.
Among other things, the MMD is routinely used as a test statistic and for hypothesis testing in non-parametric statistics and machine learning~\citep{Gretton2012, Sejdinovic2013}.

The Riemann hypothesis (RH) states that the non-trivial roots of the Riemann zeta function $\zeta(s) = \sum_{k=1}^\infty k^{-s}$ on the complex plane all have real part $\sfrac{1}{2}$.
The purpose of this note is to show that the RH is equivalent to a certain polynomial rate of convergence of the MMD of the uniform measure on $[0, 1]$ and the empirical measure of the \emph{Farey sequence}.
The equivalence holds for a large class of commonly used kernels, including Matérns.
The $n$th Farey sequence 
$F_n = (a_{i,n})_{i=1}^N$ 
is the increasing sequence of  reduced fractions on $[0, 1]$ whose denominators do not exceed $n$.
The first six Farey sequences are 
\begin{align*}
  F_1 &= \bigg( \red{\frac{0}{1}}, \: \red{\frac{1}{1}} \bigg)  = (a_{1,1}, \: a_{2,1}), \\
  F_2 &= \bigg( \frac{0}{1}, \: \red{\frac{1}{2}}, \: \frac{1}{1} \bigg) = (a_{1,2}, \: a_{2,2}, \: a_{3,2}), \\
  F_3 &= \bigg( \frac{0}{1}, \: \red{\frac{1}{3}}, \: \frac{1}{2}, \: \red{\frac{2}{3}}, \: \frac{1}{1} \bigg) = (a_{1,3}, \: a_{2,3}, \: a_{3,3}, \: a_{4,3}, \: a_{5,3}), \\
  F_4 &= \bigg( \frac{0}{1}, \: \red{\frac{1}{4}}, \: \frac{1}{3}, \: \frac{1}{2}, \: \frac{2}{3}, \: \red{\frac{3}{4}}, \: \frac{1}{1} \bigg) = (a_{1,4}, \: a_{2,4}, \: a_{3,4}, \: a_{4,4}, \: a_{5,4}, \: a_{6,4}, \: a_{7,4}) , \\
  F_5 &= \bigg( \frac{0}{1}, \: \red{\frac{1}{5}}, \: \frac{1}{4}, \: \frac{1}{3}, \: \red{\frac{2}{5}}, \: \frac{1}{2}, \: \red{\frac{3}{5}}, \: \frac{2}{3}, \: \frac{3}{4}, \: \red{\frac{4}{5}}, \: \frac{1}{1} \bigg) = (a_{1,5}, \: a_{2,5}, \ldots , \: a_{10, 5}, \: a_{11,5}), \\
  F_6 &= \bigg( \frac{0}{1}, \: \red{\frac{1}{6}}, \: \frac{1}{5}, \: \frac{1}{4}, \: \frac{1}{3}, \: \frac{2}{5}, \: \frac{1}{2}, \: \frac{3}{5}, \: \frac{2}{3}, \: \frac{3}{4}, \: \frac{4}{5}, \: \red{\frac{5}{6}}, \: \frac{1}{1} \bigg) = (a_{1,6}, \: a_{2,6}, \ldots , \: a_{12,6}, \: a_{13,6}),
\end{align*}
where we have bolded the points in $F_n$ that do not appear in $F_{n-1}$.
The first 18 Farey sequences are shown in Figure~\ref{Fig:Farey}.
Note that $n$ refers to the index of a Farey sequence, not to the number of points in the $n$th Farey sequence, which is $N = \lvert F_n \rvert = \Phi(n)+1$,  where $\Phi$ is the summatory  totient function.

Farey sequences have long history and many appearances in different branches of mathematics~\cite{cobeli2003haros}. The most intriguing appearance of Farey sequences is related to their connections with the RH, which originate with a classical result by Franel~\cite{Franel1924}, who proved that 
\begin{equation} \label{Eq:Franel}
  \sum_{i=1}^N \bigg( \frac{i}{N} - a_{i,n} \bigg)^2 = O(n^{-1+\varepsilon}) \qquad \text{ as } \qquad n \to \infty
\end{equation}
for every $\varepsilon > 0$ is equivalent to the RH.
Equation~\eqref{Eq:Franel} is a statement about the uniformity of the distribution of Farey sequences.
We refer to~\cite{Mikolas1992} for a comprehensive review.
In this note we exploit these connections and recast Franel's result using the statistical concept of MMD, hence providing an equivalent formulation of the RH with statistical flavour.

\begin{figure}
  
  \centering
  \includegraphics[width=0.48\textwidth]{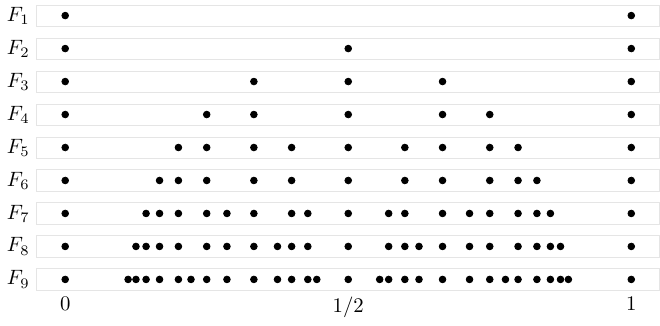}
  \hfill
  \includegraphics[width=0.48\textwidth]{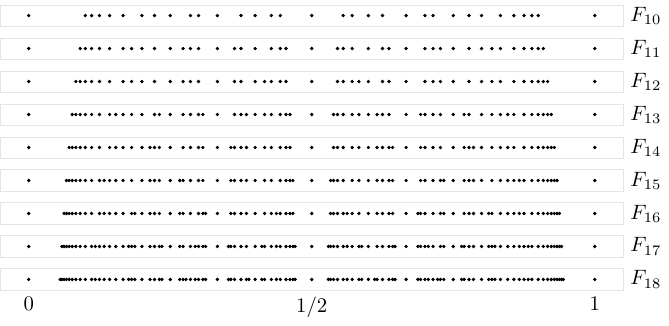}
  \caption{The first 18 Farey sequences $F_1, \ldots, F_{18}$.}
  \label{Fig:Farey}
  
\end{figure}

Equation~\eqref{Eq:Franel} can be understood as a statement about the discretised $L^2$-discrepancy; see~\cite{dress1999discrepance, Niederreiter1973} for results on the star-discrepancy.
Let $X = (x_1, \ldots, x_N) \subset [0, 1]$ be an increasing sequence.
The local discrepancy of $X$ on $[0, 1]$ is given by
\begin{equation*}
  D_\textup{loc}(A, X) = \bigg\lvert \frac{\lvert A \cap X \rvert}{N} - \mathrm{meas}(A) \bigg\lvert \qquad \text{ for measurable } \qquad A \subseteq [0, 1].
\end{equation*}
The $L^2$-discrepancy measures the uniformity of $X$ and is defined as
\begin{equation*}
  D^2(X) = \sqrt{ \int_0^1 D_\textup{loc}([0, \alpha], X)^2 \dif \alpha }.
\end{equation*}
An approximation $\tilde{D}^2(X)$ to $D^2(X)$ can be obtained by discretising at $x_1, \ldots, x_N$:
\begin{equation*}
  \tilde{D}^2(X) = \sqrt{ \frac{1}{N} \sum_{i=1}^N D_\textup{loc}([0, x_i], X)^2 } = \sqrt{ \frac{1}{N} \sum_{i=1}^N \bigg( \frac{i}{N} - x_i \bigg)^2 } \approx D^2(X) ,
\end{equation*}
where the second equality follows from the facts that $\mathrm{meas}([0,x_i]) = x_i$ and, since $X$ is an increasing sequence, $\lvert [0, x_i] \cap X \rvert = i$.
A theorem of Mertens~\cite[§ 18.5]{HardyWright1960} gives the asymptotic equivalence
\begin{equation} \label{Eq:Mertens}
  N = \lvert F_n \rvert \sim \frac{n^2}{2 \zeta(2)} = \frac{3n^2}{\pi^2} ,
\end{equation}
where $\zeta(2) = \pi^2 / 6$ is the famous value of the Riemann zeta function at $s = 2$ originally computed by Euler.
Franel's result in Equation~\eqref{Eq:Franel} and the asymptotics in Equation~\eqref{Eq:Mertens} then show that the RH is equivalent to
\begin{equation*}
  \tilde{D}^2(F_n) = \sqrt{ \frac{1}{N} \sum_{i=1}^N \bigg( \frac{i}{N} - a_{i,n} \bigg)^2 } = O(n^{-3/2+\varepsilon}) = O(N^{-3/4+\varepsilon})
\end{equation*}
for every $\varepsilon > 0$.
That is, the RH is equivalent to a statement about the discretised $L^2$-discrepancies of Farey sequences.
It should now come as no suprise that the RH can also be formulated in terms of the MMD.

\section{Results and remarks} \label{Sec:main}

For the MMD between the uniform measure on $[0, 1]$ and the empirical measure for a sequence points $X = (x_1, \ldots, x_N) \subset [0, 1]$ we use the simplified notation
\begin{equation} \label{Eq:MMD-01}
        \mathrm{MMD}(X) = \sup_{ \lVert f \rVert_H \leq 1 } \bigg\lvert \int_0^1 f(x) \dif x - \frac{1}{N} \sum_{i=1}^N f(x_i) \bigg \rvert .
\end{equation}
The following theorem connects the rate of convergence of MMDs of Farey sequences to the RH.
We give the proof in Section~\ref{Sec:Proofs}.
 
\begin{theorem} \label{Thm:Main}
  Let $K$ be a positive-semidefinite kernel on $[0, 1]$ and $H$ its RKHS.
  Suppose that
  \begin{enumerate}
  \item[(a)] $H$ is a subset of $W^{1,2}([0, 1])$, the Sobolev space of order one on $[0, 1]$, and
  \item[(b)] $H$ contains the function $x \mapsto a + b x + x^\beta$ for some $a, b \in \mathbb{R}$ and $\beta \in [2, \gamma] \cup \{4, 5\}$, where $\gamma = 1 + \sfrac{6}{\sqrt{3}D} \approx 3.405$ and $D = \sfrac{\pi^2}{6} + \sfrac{2}{3} \log 2 - \sfrac{2}{3}$. 
  \end{enumerate}
  Then the RH is equivalent to
  \begin{equation} \label{Eq:RH}
    \mathrm{MMD}(F_n) = O(n^{-3/2 + \varepsilon}) = O(N^{-3/4 + \varepsilon}) \qquad \text{ for every } \qquad \varepsilon > 0.
  \end{equation}
\end{theorem}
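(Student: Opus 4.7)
The plan is to prove both directions by bracketing $\mathrm{MMD}(F_n)$ between Farey-specific quantities whose rates of decay are independently known (or can be shown) to characterize the Riemann hypothesis. Assumption (a) will produce an upper bound $\mathrm{MMD}(F_n) \leq C\,\tilde{D}^2(F_n)$ that delivers the forward implication via Franel's theorem; assumption (b) will produce a specific functional estimate that I would then convert to the RH through a Möbius-inversion and Dirichlet-series argument for $\zeta(s+1)/\zeta(s)$.

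For the forward implication I would begin from the standard integration-by-parts identity
\begin{equation*}
  \int_0^1 f(t) \, \dif t - \frac{1}{N} \sum_{i=1}^N f(x_i) = \int_0^1 f'(t) \, (F_N(t) - t) \, \dif t,
\end{equation*}
valid for any absolutely continuous $f$, with $F_N$ the empirical CDF of the increasing sequence $X$. Cauchy--Schwarz together with the continuous Sobolev embedding $H \hookrightarrow W^{1,2}([0, 1])$ granted by (a) and the closed graph theorem gives $\mathrm{MMD}(X) \leq C \, D^2(X)$ for some $C = C(K)$. A telescoping computation tailored to Farey sequences (using that $a_{1,n} = 0$ and $a_{N,n} = 1$) then yields the exact identity $D^2(F_n)^2 = \tilde{D}^2(F_n)^2 - 1/(6 N^2)$, hence $\mathrm{MMD}(F_n) \leq C \, \tilde{D}^2(F_n)$. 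Since equations~\eqref{Eq:Franel} and~\eqref{Eq:Mertens} together recast Franel's theorem as RH $\iff \tilde{D}^2(F_n) = O(n^{-3/2 + \varepsilon})$, this closes the forward implication.

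For the reverse implication the reproducing property combined with (b) yields
\begin{equation*}
  \bigg| \frac{1}{3} - \frac{1}{N} \sum_{i=1}^N a_{i,n}^2 \bigg| \leq \|x^2\|_H \cdot \mathrm{MMD}(F_n).
\end{equation*}
Grouping the Farey sum by denominator $q$ and applying Möbius inversion to the Jordan-totient-type sum over $\{a : \gcd(a, q) = 1, \, 1 \leq a \leq q\}$ produces, for $q \geq 2$, the closed form $q^2 \phi(q)/3 + q \prod_{p \mid q}(1-p)/6$ and hence the exact identity
\begin{equation*}
  \frac{1}{N} \sum_{i=1}^N a_{i,n}^2 - \frac{1}{3} = \frac{1}{3 N} + \frac{1}{6 N} \sum_{q = 2}^n \frac{\prod_{p \mid q}(1-p)}{q}.
\end{equation*}
The summand is multiplicative with Dirichlet series $\zeta(s+1)/\zeta(s)$, whose singularities in $\{\mathrm{Re}\, s > -1\}$ are the simple pole at $s = 0$ (contributing a deterministic main term of size $\log n / N$) and the non-trivial zeros of $\zeta$. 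A Perron-formula contour integration combined with a standard Omega-theorem then shows that an $O(n^{-3/2 + \varepsilon})$ rate on the left-hand side is equivalent to $\sup_\rho \mathrm{Re}(\rho) \leq \tfrac{1}{2}$, i.e., the RH.

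The main obstacle is this last step of the reverse implication: promoting the big-Oh bound coming from (b) and the MMD rate into a zero-free-region statement requires both the Perron-type contour argument to extract the main term and an Omega-type lower bound to rule out exceptional zeros; these are delicate but standard in analytic number theory. The forward direction is by contrast routine once the telescoping identity $D^2(F_n)^2 = \tilde{D}^2(F_n)^2 - 1/(6 N^2)$ is verified.
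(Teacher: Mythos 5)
Your proposal is correct in substance and reaches the theorem by a route that coincides with the paper's in the forward direction but genuinely departs from it in the reverse direction. For the forward implication the paper bounds $\mathrm{MMD}(F_n)$ by the MMD for the reference kernel $K_0(x,y)=1+\min\{x,y\}$ via Aronszajn's RKHS inclusion theorem and then evaluates that MMD exactly (Lemma~\ref{Lemma:BM-MMD}); your integration-by-parts identity plus Cauchy--Schwarz plus the closed graph theorem is the same bound $\mathrm{MMD}(F_n)\leq C\,D^2(F_n)$ in different clothing, since $D^2(X)^2$ coincides with the squared $K_0$-MMD. One small correction: your exact identity $D^2(F_n)^2=\tilde D^2(F_n)^2-1/(6N^2)$ is true, but it does not follow from $a_{1,n}=0$ and $a_{N,n}=1$ alone; it requires the symmetry of $F_n$ about $\sfrac{1}{2}$ together with $\sfrac{1}{2}\in F_n$, which force $\sum_i a_{i,n}=N/2$ --- this is exactly the hypothesis isolated in the paper's lemma. (For your purposes only the inequality $D^2\leq\tilde D^2$ is needed, so nothing breaks.) For the reverse implication the paper simply invokes Mikol\'as's theorem that the RH is equivalent to $\lvert\tfrac13-\tfrac1N\sum_i a_{i,n}^2\rvert=O(n^{-3/2+\varepsilon})$, whereas you rederive the needed half of that result from scratch; your closed form for $\sum_{a\leq q,\,(a,q)=1}a^2$, the resulting exact identity, and the Dirichlet series $\zeta(s+1)/\zeta(s)$ all check out. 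The payoff of your version is self-containedness; the cost is the analytic number theory at the end. However, you overestimate that cost: for the implication you actually need (MMD rate $\Rightarrow$ RH) the identity gives $\sum_{q\leq n}q^{-1}\prod_{p\mid q}(1-p)=O(n^{1/2+\varepsilon})$, and then Abel summation shows $\zeta(s+1)/\zeta(s)$ extends analytically to $\operatorname{Re}(s)>\sfrac{1}{2}$, where $\zeta(s+1)\neq 0$, so $1/\zeta(s)$ is analytic there and the RH follows. No Perron contour integration and no Omega theorem are required; those would only be needed to prove the converse direction of Mikol\'as's equivalence, which your argument (like the paper's) already obtains for free from the forward implication because $x^2$ lies in $W^{1,2}([0,1])$.
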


Note that assumption~(b) is satisfied if $H$ contains the monomial $x \mapsto x^\beta$ for some $\beta \in \{2, 3, 4, 5\}$.
Figure~\ref{Fig:Farey-MMD} shows how the MMD behaves.
There are three commonly used classes of kernels that are covered by Theorem~\ref{Thm:Main}:
\begin{enumerate}
  \item Let $\lambda > 0$ be a correlation length parameter.
The Matérn kernel of order $\nu > 0$ is given by
\begin{equation} \label{Eq:Matern}
  K_\nu(x, y) = \frac{2^{1-\nu}}{\Gamma(\nu)} \bigg( \frac{\sqrt{2\nu} \lvert x - y \rvert }{\lambda} \bigg)^\nu \mathcal{K}_\nu \bigg( \frac{\sqrt{2\nu} \lvert x - y \rvert }{\lambda} \bigg) ,
\end{equation}
where $\Gamma$ is the gamma function and $\mathcal{K}_\nu$ the modified Bessel function of the second kind of order $\nu$.
Matérn kernels are widely used to define Gaussian random field models in spatial statistics~\cite{Stein1999}.
It is well known that the RKHS of a Matérn kernel of order $\nu$ on $[0, 1]$ is norm-equivalent to the (possibly fractional) Sobolev space $W^{\nu + 1/2,2}([0, 1])$; see~\cite[Thm.\@~6.13 and Cor.\@~10.48]{Wendland2005}.
Assumptions~(a) and~(b) are thus satisfied if $\nu \geq \sfrac{1}{2}$ because Sobolev spaces are nested and contain all polynomials.
\item By integrating $\min\{x, y\}$, the covariance kernel of Brownian motion, $m$ times and adding a polynomial part that removes boundary conditions at the origin one obtains the released $m$-fold integrated Brownian motion kernel
\begin{equation} \label{Eq:Int-BM}
  K_m(x, y) = \sum_{k=0}^m \frac{(xy)^k}{(k!)^2} + \int_0^1 \frac{(x - t)_+^m \, (y - t)_+^m}{(m!)^2} \dif t ,
\end{equation}
where $(x)_+ = \max\{0, x\}$.
Because its RKHS is norm-equivalent to $W^{m+1,2}([0,1])$, this kernel satisfies the assumptions of Theorem~\ref{Thm:Main}~\cite[p.\@~322]{Berlinet2004}.
\item The energy-distance kernel 
\begin{equation} \label{eq:energy-distance-K}
 K_\alpha(x, y)= |x|^\alpha+|y|^\alpha-
|x-y|^\alpha
\end{equation}
is positive-semidefinite for $\alpha \in (0, 2)$.
The energy-distance kernel is, up to scaling, the covariance kernel of the fractional Brownian motion with Hurst index $\sfrac{\beta}{2}$; for $\alpha = 1$ it reduces to the Brownian motion kernel.
In Section~\ref{Sec:energy-distance-proof} we use a characterisation by Barton and Poor~\cite{BartonPoor1988} to verify that the RKHS of an energy-distance kernel satisfies the assumptions of Theorem~\ref{Thm:Main} for every $\alpha \in [1, 2)$.
\end{enumerate}

\begin{figure}[t]
  
  \centering
  \includegraphics[width=\textwidth]{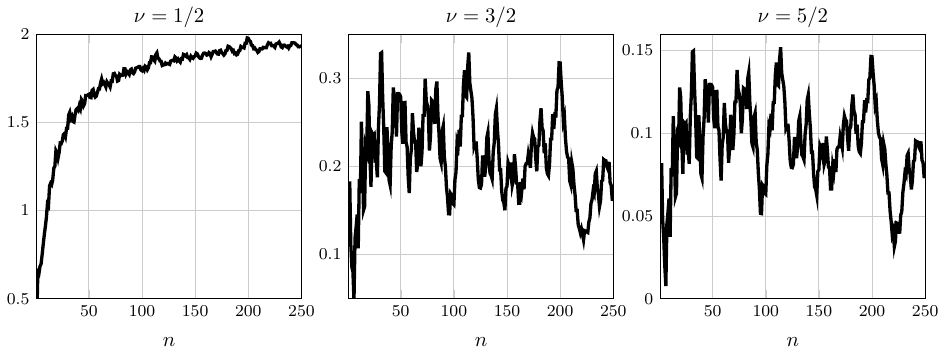}
  \caption{The plots show $\mathrm{MMD}(F_n) \cdot n^{3/2}$, the normalised MMDs of Farey sequences, up to $n = 250$ for Matérn kernels with (i) $\nu = 1/2$ and $\lambda = 1$, (ii) $\nu = 3/2$ and $\lambda = 3^{1/2}$, and (iii) $\nu = 5/2$ and $\lambda = 5^{1/2}$. Note that $n$ is the index of the Farey sequence, not the number of points. For $n = 250$ we have $N = \lvert F_n \rvert = 19,\!025$.}
  \label{Fig:Farey-MMD}
  
\end{figure}

We summarise these observations in the following proposition.

\begin{proposition} \label{Cor:Main}
Suppose that $K$ is (i) any Matérn kernel of order $\nu \geq \sfrac{1}{2}$ in Equation~\eqref{Eq:Matern}, (ii) a released $m$-fold integrated Brownian motion kernel in Equation~\eqref{Eq:Int-BM} with $m \geq 0$, or (iii) an energy-distance kernel in Equation~\eqref{eq:energy-distance-K} with $\alpha \in [1, 2)$. Then the RH is equivalent to Equation~\eqref{Eq:RH}.
\end{proposition}

\begin{remark}
Matérn kernels, integrated Brownian motion kernels, and energy-distance kernel are only finitely differentiable.
Theorem~\ref{Thm:Main} applies also to some infinitely differentiable kernels, such as $K(x, y) = \exp(xy)$.
The RKHS of this particular kernel is contained in every Sobolev space and includes all polynomials by the virtue of the Taylor expansion $K(x, y) = \sum_{k=0}^\infty (xy)^k / k!$; see \cite[Sec.\@~2.1]{Paulsen2016} and~\cite{ZwicknaglSchaback2013}.
The theorem does not cover the popular Gaussian kernel $K(x, y) = \exp(-(x-y)^2/(2\lambda^2))$ because its RKHS does not contain any non-trivial polynomials~\cite{Minh2010}.
\end{remark}

\begin{remark}
There are a number of results which state that the RH is equivalent to
\begin{equation*}
  \bigg\lvert \int_0^1 f(x) \dif x - \frac{1}{N} \sum_{i=1}^N f(a_{i,n}) \bigg\rvert = O(n^{-3/2 + \varepsilon})
\end{equation*}
for every $\varepsilon > 0$ if $f$ is a suitable fixed function.
The proof of Theorem~\ref{Thm:Main} uses such results from~\cite{Mikolas1949, yoshimoto1998farey} that apply to certain monomials.
Other options from~\cite{Mikolas1949, Mikolas1951, yoshimoto1998farey} include functions whose derivatives satisfy a particular inequality and certain functions with singularity at the origin, such as $f(x) = \log(x)$.
Kanemitsu and Yoshimoto~\cite{Kanemitsu2000} provide a collection of suitable functions expressed as Fourier cosine series.
\end{remark}

\section{Implications for energy-distance kernels}

Theorem~\ref{Thm:Main} has some curious implications when applied to the energy-distance kernel $K_\alpha$ in Equation~\eqref{eq:energy-distance-K}.
By direct calculation, we obtain the following expression for the squared MMD of the $n$th Farey sequence:
\begin{equation}
\label{eq:mmd2}
\mathrm{MMD}^2(F_n)=\frac{4}{(\alpha+1)N}
\sum_{i=1}^N  
a_{i,n}^{\alpha+1} -\frac{1}{N^2}\sum_{i,j=1}^N |a_{i,n}-a_{j,n}|^\alpha  - \frac{2}{(\alpha+1)(\alpha+2)}
\, .
\end{equation}
For $\alpha = 1$, the energy-distance kernel reduces to the Brownian motion kernel $K_1(x, y) = \min\{x, y\}$ that we use in the proof of Theorem~\ref{Thm:Main} (in particular, see Lemma~\ref{Lemma:BM-MMD}).
Here we consider $\alpha \in (1, 2)$.
Denote
\begin{equation*}
S_{\alpha,N}=\frac{1}{N}
\sum_{i=1}^N  
a_{i,n}^{\alpha+1}  \quad \text{ and} \quad T_{\alpha,N}= \frac{1}{N^2}\sum_{i,j=1}^N |a_{i,n}-a_{j,n}|^\alpha ,
\end{equation*}
so that
\begin{equation*}
  \mathrm{MMD}^2(F_n) = \bigg( \frac{4}{\alpha + 1} S_{\alpha,N} - \frac{4}{(\alpha+1)(\alpha+2)} \bigg) + \bigg( \frac{2}{(\alpha+1)(\alpha+2)} - T_{\alpha,N} \bigg) \eqqcolon \delta_{1,N} + \delta_{2,N} .
\end{equation*}
By the asymptotic uniformity of $F_n$ [see Equation~\eqref{Eq:Mikolas}],
\begin{equation*}
  S_{\alpha,N} \to \int_0^1 x^{\alpha+1} \dif x = \frac{1}{\alpha + 2} 
\end{equation*}
with the rate $O(n^{-3/2 + \varepsilon})$ for every $\varepsilon > 0$ as $N \to \infty$.
That is, $\delta_{1,N} = O(n^{-3/2 + \varepsilon})$.
However, Theorem~\ref{Thm:Main} and Proposition~\ref{Cor:Main} state that the RH is equivalent to $\mathrm{MMD}^2(F_n) = O(n^{-3 + \varepsilon})$ for any $\alpha \in [1, 2)$.
This has two consequences.
First, 
\begin{equation*}
  \begin{split}
  \delta_{2,N} = \frac{2}{(\alpha+1)(\alpha+2)} - T_{\alpha,N} &= \int_0^1 \int_0^1 \lvert x - y \rvert^\alpha \dif x \dif y - \frac{1}{N^2}\sum_{i,j=1}^N |a_{i,n}-a_{j,n}|^\alpha \\
  &= O(n^{-3/2+\varepsilon}) 
  \end{split}
\end{equation*}
if the RH holds.
Second, the RH requires there to be substantial cancellations between the terms $\delta_{1,N}$ and $\delta_{2,N}$ which individually tend to zero as $O(n^{-3/2 + \varepsilon})$ but whose sum must have the much faster rate $O(n^{-3+\varepsilon})$ under the RH.


\section{Proofs} \label{Sec:Proofs}


This section contains proofs for the results in Section~\ref{Sec:main}.

\subsection{Proof of Theorem~\ref{Thm:Main}}

The proof of Theorem~\ref{Thm:Main} uses the following lemma, which in fact states that the MMD for a piecewise linear kernel equals the $L^2$-discrepancy~\citep{Dick2014, Warnock72}.

\begin{lemma} \label{Lemma:BM-MMD}
  Let $N \in \mathbb{N}$ be odd and $K(x, y) = 1 + \min\{x, y\}$. If an increasing sequence $X = (x_1, \ldots, x_N)$ contains $\sfrac{1}{2}$ and is symmetric on $[0, 1]$, in that $1 - x \in X$ if $x \in X$, then
  \begin{equation*}
    \mathrm{MMD}(X)^2 = \frac{1}{N} \sum_{i=1}^N \bigg( \frac{i}{N} - x_i \bigg)^2 - \frac{1}{6N^2} .
  \end{equation*}
\end{lemma}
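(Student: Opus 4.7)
The plan is to expand $\mathrm{MMD}(X)^2$ via the standard kernel-mean formula and then reduce the resulting expression using the symmetry of $X$. For any bounded measurable kernel,
\[
\mathrm{MMD}(X)^2 = \int_0^1 \! \int_0^1 K(x,y) \dif x \dif y - \frac{2}{N} \sum_{i=1}^N \int_0^1 K(x_i, y) \dif y + \frac{1}{N^2} \sum_{i,j=1}^N K(x_i, x_j),
\]
and for $K(x,y) = 1 + \min\{x,y\}$ the additive constant contributes $1 - 2 + 1 = 0$ to this expression, so only the $\min$ part is relevant.

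First I would carry out the three integrals explicitly. Direct computation gives $\iint_{[0,1]^2} \min\{x,y\} \dif x \dif y = 1/3$ and $\int_0^1 \min\{x_i, y\} \dif y = x_i - x_i^2/2$. For the double sum, because $X$ is sorted, $\min\{x_i, x_j\} = x_{\min(i,j)}$, and the number of index pairs $(i,j) \in \{1,\dots,N\}^2$ with $\min(i,j) = k$ equals $2(N-k) + 1$, so $\sum_{i,j} \min\{x_i, x_j\} = \sum_{k=1}^N (2(N-k)+1)\, x_k$. Collecting all terms, I expect the intermediate identity
\[
\mathrm{MMD}(X)^2 = \frac{1}{3} + \frac{1}{N} \sum_{i=1}^N x_i^2 - \frac{2}{N^2} \sum_{k=1}^N k\, x_k + \frac{1}{N^2} \sum_{k=1}^N x_k .
\]

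Next I would expand the right-hand side of the claimed formula. Using Faulhaber's identity $\sum_{i=1}^N i^2 = N(N+1)(2N+1)/6$, one obtains
\[
\frac{1}{N} \sum_{i=1}^N \bigg( \frac{i}{N} - x_i \bigg)^{2} - \frac{1}{6N^2} = \frac{1}{3} + \frac{1}{2N} + \frac{1}{N} \sum_{i=1}^N x_i^2 - \frac{2}{N^2} \sum_{i=1}^N i\, x_i ,
\]
so matching the two expressions reduces the lemma to the single scalar identity $\sum_{k=1}^N x_k = N/2$, equivalently $\frac{1}{N^2}\sum_k x_k = \frac{1}{2N}$.

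This is where the symmetry hypothesis enters: pairing each $x \in X \setminus \{1/2\}$ with $1 - x$ gives $(N-1)/2$ pairs of sum $1$, and the solitary central point $1/2$ contributes $1/2$, yielding $\sum_{k=1}^N x_k = (N-1)/2 + 1/2 = N/2$. The computations are entirely elementary, so there is no serious obstacle; the main thing to get right is the counting $|\{(i,j) : \min(i,j) = k\}| = 2(N-k)+1$ and the careful bookkeeping of the $O(1/N)$ correction arising from $\sum i^2$, which is precisely what generates the $-1/(6N^2)$ term in the statement.
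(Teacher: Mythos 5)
Your proposal is correct and follows essentially the same route as the paper: expand $\mathrm{MMD}(X)^2$ via the standard closed-form kernel expression, compute the integrals of $\min\{x,y\}$, count the pairs with $\min(i,j)=k$ to evaluate the double sum, and invoke the symmetry of $X$ to get $\sum_k x_k = N/2$. The only cosmetic difference is that the paper completes the square toward $(i/N - x_i)^2$ directly while you expand both sides and match, and your observation that the additive constant $1$ in the kernel cancels is a minor simplification of the same bookkeeping.
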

\begin{proof}
From the reproducing property of the kernel in $H$ it follows that the squared MMD admits the closed form expression
\begin{equation} \label{Eq:MMD-Explicit}
  \mathrm{MMD}(X)^2 = \int_0^1 \int_0^1 K(x, y) \dif x \dif y - \frac{2}{N} \sum_{i=1}^N \int_0^1 K(x, x_i) \dif x + \frac{1}{N^2} \sum_{i,j=1}^N K(x_i, x_j) 
\end{equation}
in terms of the kernel and its integrals; see, for example, \cite[Lem.\@~6]{Gretton2012} or \cite[Sec.\@~10.2]{NovakWozniakowski2010}.
For the kernel $K(x, y) = 1 + \min\{x, y\}$ it is straightforward to compute that
\begin{equation*}
  \int_0^1 K(x, y) \dif x = 1 + \frac{1}{2} (2 - y) y \quad \text{ and } \quad \int_0^1 \int_0^1 K(x, y) \dif x \dif y = \frac{4}{3}.
\end{equation*}
Therefore Equation~\eqref{Eq:MMD-Explicit} gives
\begin{align*}
  \mathrm{MMD}(X)^2 &= \frac{4}{3} - \frac{2}{N} \sum_{i=1}^N \bigg( 1 + \frac{1}{2}(2-x_i)x_i \bigg) + \frac{1}{N^2} \sum_{i,j=1}^N (1 + \min\{x_i, x_j\}) \\
        &= \frac{1}{3} - \frac{1}{N} \sum_{i=1}^N (2 - x_i) x_i + \frac{1}{N^2} \sum_{i=1}^N (2N - 2i + 1) x_i \\
        &= \frac{1}{N} \sum_{i=1}^N \bigg(\frac{i}{N} - x_i \bigg)^2 + \frac{1}{N^2} \sum_{i=1}^N x_i - \frac{3N + 1}{6N^2}.
\end{align*}
Because the sequence $X$ contains $\sfrac{1}{2}$ and is symmetric,
\begin{equation*}
  \frac{1}{N^2} \sum_{i=1}^N x_i - \frac{3N + 1}{6N^2} = \frac{1}{N^2} \bigg( \frac{1}{2} + \frac{N-1}{2} \bigg) - \frac{3N + 1}{6N^2} = -\frac{1}{6N^2} .
\end{equation*}
This concludes the proof.
\end{proof}

\begin{proof}[Proof of Theorem~\ref{Thm:Main}]
Consider first the kernel $K_0(x, y) = 1 + \min\{x, y\}$ from Lemma~\ref{Lemma:BM-MMD}.
Because they contain $\sfrac{1}{2}$ for $n \geq 2$ and are symmetric, we may apply the lemma to the Farey sequences $F_n$ and so obtain
\begin{equation} \label{Eq:MMD-BM-Fn}
  \mathrm{MMD}_0(F_n)^2 = \frac{1}{N} \sum_{i=1}^N \bigg( \frac{i}{N} - a_{i,n} \bigg)^2 - \frac{1}{6N^2} 
\end{equation}
for the $K_0$-MMD of $F_n$ when $n \geq 2$.
The result by Franel in Equation~\eqref{Eq:Franel} and the asymptotics $N = \lvert F_n \rvert \sim 3n^2/\pi^2$ from Equation~\eqref{Eq:Mertens} imply that the RH is equivalent to
\begin{equation*}
  \sum_{i=1}^N \bigg( \frac{i}{N} - a_{i,n} \bigg)^2 = O(n^{-1 + \varepsilon}) = O(N^{-1/2 + \varepsilon})
\end{equation*}
for every $\varepsilon > 0$.
Inserting this in Equation~\eqref{Eq:MMD-BM-Fn} and observing that the second term is negligible shows that the RH is equivalent to $\mathrm{MMD}_0(F_n) = O(N^{-3/4 + \varepsilon})$, from which Equation~\eqref{Eq:RH} follows for $K = K_0$ by using Equation~\eqref{Eq:Mertens} again.

It is well known~(e.g., \cite[Sec.\@~3.1]{AdlerTaylor2007} or \cite[Sec.\@~10]{VaartZanten2008}) that the RKHS $H_0$ of $K_0(x, y) = 1 + \min\{x, y\}$ is norm-equivalent to the Sobolev space $W^{1,2}([0,1])$.
By an RKHS inclusion theorem of Aronszajn~\cite[Thm.\@~5.1]{Paulsen2016}, the norm of any RKHS $H \subseteq H_0$ satisfies $\lVert f \rVert_{H_0} \leq c \lVert f \rVert_{H}$ for a positive $c$ and all $f \in H$.
From this it follows that the unit ball of $H$ is contained in the $c$-ball of $H_0$.
Because the MMD in Equation~\eqref{Eq:MMD-01} is a supremum over the unit ball of the RKHS, the MMD of $F_n$ for any kernel $K$ that satisfies assumption~(a) is bounded from above by a constant multiple of the MMD of $F_n$ for $K_0$.
Consequently, 
\begin{equation*}
  \mathrm{MMD}(F_n) = O(\,\mathrm{MMD}_0(F_n)\,) = O(n^{-3/2 + \varepsilon}) = O(N^{-3/4 + \varepsilon}) 
\end{equation*}
for every $\varepsilon > 0$ if the RH is true.
To show that this rate implies the RH we use assumption~(b).
Let $f_\beta(x) = a + bx + x^\beta$ for $a,b \in \mathbb{R}$ and $\beta \in [2, \gamma] \cup \{4, 5\}$, where 
\begin{equation*}
  \gamma = 1 + \frac{6}{\sqrt{3} D} \approx 3.405 \quad \text{ and } \quad D = \frac{\pi^2}{6} + \frac{2}{3} \log 2 - \frac{2}{3} .
\end{equation*}
Because Farey sequences are symmetric about $\sfrac{1}{2}$,
\begin{equation*}
  \bigg\lvert \int_0^1 f_\beta(x) \dif x - \frac{1}{N} \sum_{i=1}^N f(a_{i,n}) \bigg\rvert = \bigg\lvert \int_0^1 x^\beta \dif x - \frac{1}{N} \sum_{i=1}^N a_{i,n}^\beta \bigg\rvert.
\end{equation*}
Results by Mikolás~\cite[Thm.\@~5]{Mikolas1949} and Yoshimoto~\cite[pp.\@~302--3]{yoshimoto1998farey} state, in combination with the asymptotics in Equation~\eqref{Eq:Mertens}, that the RH is equivalent to
\begin{equation} \label{Eq:Mikolas}
  \bigg\lvert \int_0^1 x^\beta \dif x - \frac{1}{N} \sum_{i=1}^N a_{i,n}^\beta \bigg\rvert = O(n^{-3/2 + \varepsilon}) = O(N^{-3/4 + \varepsilon}) 
\end{equation}
for every $\varepsilon > 0$.
Under assumption~(b) the unit ball of $H$ contains a constant multiple of $f_\beta$, so that the rate $\mathrm{MMD}(F_n) = O(n^{-3/2 + \varepsilon})$ implies Equation~\eqref{Eq:Mikolas} and consequently also the RH.
This concludes the proof.
\end{proof}

\subsection{Proof of Proposition~\ref{Cor:Main} for energy-distance kernels} \label{Sec:energy-distance-proof}

We first prove that RKHSs of energy-distance kernels contain suitable polynomials.

\begin{lemma} \label{lemma:energy-distance-rkhs}
  Let $m$ be a positive integer.
  The RKHS of the energy-distance kernel $K_\alpha$ in~\eqref{eq:energy-distance-K} on $[0, 1]$ contains a polynomial of degree $m$ for every $\alpha \in (0, 2)$.
\end{lemma}
\begin{proof}
  Let $H_\alpha(\mathbb{R})$ denote the RKHS of the energy-distance kernel~\eqref{eq:energy-distance-K} on $\mathbb{R}$.
  Theorem~4.1 in~\cite{BartonPoor1988} states that $f \in H_\alpha(\mathbb{R})$ if and only if there is $g \colon \mathbb{R} \to \mathbb{C}$ satisfying $\int_\mathbb{R} \lvert g(\omega) \rvert^2 \lvert \omega \rvert^{1 - \alpha} \dif \omega < \infty$ such that 
  \begin{equation} \label{eq:f-barton-poor}
    f(x) = \int_\mathbb{R} \frac{e^{i \omega x} - 1}{i \omega} \, \overline{g(\omega)} \lvert \omega \rvert^{1 - \alpha} \dif \omega \quad \text{ for all } \quad x \in \mathbb{R}.
  \end{equation}
  We select
  \begin{equation*}
    g(\omega) = -i \omega \, \mathrm{sinc}^{m+1} \bigg( \frac{\omega}{2\pi} \bigg) \frac{1}{\lvert \omega \rvert^{1-\alpha}} .
  \end{equation*}
  For this function the integral
    \begin{equation*}
    \int_\mathbb{R} \lvert g(\omega) \rvert^2 \lvert \omega \rvert^{1 - \alpha} \dif \omega = \int_\mathbb{R} \lvert \omega \rvert^{1 + \alpha} \mathrm{sinc}^{2(m+1)} \bigg( \frac{\omega}{2\pi} \bigg) \dif \omega = \int_\mathbb{R} \frac{\sin^{2(m+1)}(\omega/2\pi)}{\lvert \omega \rvert^{2m+1 - \alpha}} \dif \omega
  \end{equation*}
  is finite if $2m > \alpha$.
  Therefore the function $f$ defined in Equation~\eqref{eq:f-barton-poor} is in $H_\alpha(\mathbb{R})$ for any $\alpha \in (0, 2)$.
  This function is
  \begin{equation*}
    \begin{split}
    f(x) &= \int_\mathbb{R} \bigg[ e^{i\omega x} \mathrm{sinc}^{m+1}  \bigg( \frac{\omega}{2\pi} \bigg) - \mathrm{sinc}^{m+1}  \bigg( \frac{\omega}{2\pi} \bigg) \bigg]\dif \omega \\
    &= \int_{\mathbb{R}} [\cos(x \omega) + i \sin(x\omega) ] \, \mathrm{sinc}^{m+1} \bigg( \frac{\omega}{2\pi} \bigg) \dif \omega - \mathrm{constant} \\
        &= \int_{\mathbb{R}} \cos (x \omega) \, \mathrm{sinc}^{m+1} \bigg( \frac{\omega}{2\pi} \bigg) \dif \omega - \mathrm{constant},
    \end{split}
  \end{equation*}
  where the integral is the cosine transform of the $(m+1)$th power of $\operatorname{sinc}$.
  By Equation~(16) in Section~1.6 of~\cite{Erdelyi1954}, this cosine transform equals a polynomial of degree $m$ on some non-empty interval $[0, \delta]$.
  By applying a suitable scaling we obtain a function in $H_\alpha(\mathbb{R})$ that is a polynomial of degree $m$ on $[0, 1]$ with a leading coefficient one.
  This proves the claim because the RKHS of $K_\alpha$ on $[0, 1]$ consists of restrictions onto $[0, 1]$ of functions in $H_\alpha(\mathbb{R})$~\cite[p.\@~25]{Berlinet2004}.
\end{proof}

\begin{proof}[Proof of Proposition~\ref{Cor:Main} for energy-distance kernels with $\alpha \in [1, 2)$]
  Assumption~(b) holds by setting $m = 2$ in Lemma~\ref{lemma:energy-distance-rkhs}.
  For $\alpha = 1$ and $x, y \geq 0$ the energy-distance kernel becomes
  \begin{equation*}
    K_1(x, y) = \lvert x \rvert + \lvert y \rvert - \lvert x - y \rvert = \min\{x, y\} .
  \end{equation*}
  In the proof of Theorem~\ref{Thm:Main} we noted that the RKHS of $K(x, y) = 1 + \min\{x, y\}$ on $[0, 1]$ is norm-equivalent to $W^{1,2}([0,1])$.
  The RKHS of $K$ consists of sums of constant functions with elements of the RKHS of $K_1$~\cite[p.\@~24]{Berlinet2004}.
  From the characterisation of $H_\alpha(\mathbb{R})$ in~\cite{BartonPoor1988} that we used in the proof of Lemma~\ref{lemma:energy-distance-rkhs} it also follows that $H_\alpha(\mathbb{R}) \subseteq H_\gamma(\mathbb{R})$ if $\alpha \geq \gamma$.
  This inclusion is inherited by RKHSs on $[0, 1]$.
  Therefore $K_\alpha$ satisfies assumption~(a) for $\alpha \in [1, 2)$.
\end{proof}

\section*{Acknowledgements}

TK was supported by the Research Council of Finland projects 338567 (``Scalable, adaptive and reliable probabilistic integration''), 359183 (``Flagship of Advanced Mathematics for Sensing, Imaging and Modelling''), and 368086 (``Inference and approximation under misspecification'').

\setlength{\bibsep}{2   pt plus 0.3ex} 

\bibliographystyle{abbrv}

\end{document}